\theoremstyle{plain}
\newtheorem{Thm}{Theorem}
\newtheorem{Lem}{Lemma}
\newtheorem{Cor}{Corollary}
\theoremstyle{definition}
\newtheorem*{Ack}{Acknowledgment}
\theoremstyle{remark}
\newtheorem*{Rem}{Remark}
\def\Z{\mathbb Z}
\def\N{\mathbb N}
\def\Q{\mathbb Q}
\def\1{{\bf 1}}
\def\pmod #1{\ ({\rm mod}\ #1)}
\def\ord{{\rm ord}}
\def\floor #1{\lfloor{#1}\rfloor}
\def\biggfloor #1{\bigg\lfloor{#1}\bigg\rfloor}
\def\qbinom #1#2#3{{\genfrac{[}{]}{0pt}{}{#1}{#2}}_{#3}}
\begin{document}
\title{Factors of Some Lacunary $q$-Binomial Sums}
\author{Hao Pan}
\email{haopan79@yahoo.com.cn}
\address{Department of Mathematics, Nanjing University,
Nanjing 210093, People's Republic of China}
\keywords{$q$-binomial coefficient, cyclotomic polynomial, Lucas congruence}
\subjclass[2010]{Primary
11B65; Secondary 05A30, 05A10}
\thanks{The author is supported by National Natural Science Foundation of
China (Grant No. 10901078).}
\begin{abstract}
In this paper, we prove a divisibility result for the lacunary $q$-binomial sum
$$
\sum_{k\equiv r\pmod{c}}(-1)^kq^{\binom{k}{2}}\qbinom{n}{k}{q}\qbinom{(k-r)/c}{l}{q^{c}}.
$$
\end{abstract}
\maketitle

\section{Introduction}
\setcounter{equation}{0}
\setcounter{Thm}{0}
\setcounter{Lem}{0}
\setcounter{Cor}{0}

Suppose that $p$ is a prime. A classical result of Fleck asserts that
\begin{equation}\label{fleck}
\sum_{k\equiv r\pmod{p}}(-1)^k\binom{n}{k}\equiv
0\pmod{p^{\lfloor\frac{n-1}{p-1}\rfloor}},
\end{equation}
where $\floor{x}=\max\{z\in\Z:\, z\leq x\}$ is the floor function. In 1977, Weisman generalized Fleck's congruence to prime power moduli in the following way:
\begin{equation}\label{weisman}
\sum_{k\equiv r\pmod{p^\alpha}}(-1)^k\binom{n}{k}\equiv
0\pmod{p^{\lfloor\frac{n-p^{\alpha-1}}{p^{\alpha-1}(p-1)}\rfloor}}.
\end{equation}
In 2009, with help of $\psi$-operator in Fontaine's theory of
$(\phi,\Gamma)$-modules, Sun \cite{S} and Wan \cite{W} obtained a polynomial-type extension of (\ref{fleck}) and (\ref{weisman}):
\begin{equation}\label{sw}
\sum_{k\equiv r\pmod{p^\alpha}}(-1)^k\binom{n}{k}\binom{(k-r)/p^\alpha}{l}\equiv
0\pmod{p^{\lfloor\frac{n-p^{\alpha-1}-lp^\alpha}{p^{\alpha-1}(p-1)}\rfloor}}.
\end{equation}

On the other hand, motivated by the homotopy exponents of the special
unitary group ${\rm SU}(n)$, Davis and Sun \cite{DS,SD} proved another two congruences with a little different flavor:
\begin{equation}e
\label{ds}
\sum_{k\equiv r\pmod{p^{\alpha}}}(-1)^k\binom{n}{k}((k-r)/p^\alpha)^l
\equiv0\pmod{p^{\nu_p(\floor{n/p^{\alpha}}!}},
\end{equation}
\begin{equation}
\label{sd}
\sum_{k\equiv r\pmod{p^{\alpha}}}(-1)^k\binom{n}{k}\binom{(k-r)/p^\alpha}{l}
\equiv0\pmod{p^{\nu_p(\floor{n/p^{\alpha-1}}!)-l-\nu_p(l!)}},
\end{equation}
where $\nu_p(x)=\max\{i\in\N: p^i\mid x\}$ is the $p$-adic order of $x$. Notice that
neither (\ref{ds}) nor (\ref{sd}) could be deduced from (\ref{sw}), though (\ref{ds}) and (\ref{sd}) are often weaker than (\ref{sw}) provided $l$ is small.

In this paper, we shall consider the $q$-analogues of (\ref{ds}) and (\ref{sd}). For an integer $n$, as usual, define the $q$-integer
$$
[n]_q=\frac{1-q^n}{1-q}.
$$
And define the $q$-binomial coefficient
$$
\qbinom{n}{k}q=\frac{[n]_q\cdot[n-1]_q\cdots[n-k+1]_q}{[k]_q\cdot[k-1]_q\cdots[1]_q}.
$$
In particular, we set $\qbinom{n}0q=1$ and $\qbinom{n}kq=0$ for $k<0$. It is easy to see $\qbinom{n}{k}q$ is a polynomial in $q$ since
$$
\qbinom{n+1}{k}q=q^k\qbinom{n}{k}q+\qbinom{n}{k-1}q.
$$
Let $\Z[q]$ denote the polynomial ring in $q$ with integral coefficients. Then we have the following $q$-analogue of (\ref{sd}).
\begin{Thm}\label{qT1} For $n,c\in\Z^+$ and $r,h\in\Z$, the lacunary $q$-binomial sum
$$
\sum_{k\equiv r\pmod{c}}(-1)^kq^{\binom{k}{2}+hk}\qbinom{n}{k}{q}\qbinom{(k-r)/c}{l}{q^{c}}
$$
is divisible by
$$
\prod_{c\mid d}\Phi_d(q)^{\floor{n/d}-\floor{lc/d}}\cdot\prod_{\substack{b\mid c\\ b<c}}\Phi_b(q)^{\floor{n/b}-\floor{r/b}-\floor{(n-r)/b}}
$$
over $\Z[q]$, where $\Phi_d$ is the $d$-th cyclotomic polynomial.
\end{Thm}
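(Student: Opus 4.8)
The plan is to reduce the statement to a family of order-of-vanishing estimates at roots of unity, and to read those orders off from the finite $q$-binomial theorem. Since each cyclotomic polynomial $\Phi_d$ is monic and irreducible over $\Q$, and distinct $\Phi_d$ are pairwise coprime, Gauss's lemma reduces divisibility over $\Z[q]$ by the displayed product to a separate divisibility by each factor $\Phi_d^{e_d}$. Because $\Phi_d$ is coprime to $q$, we may clear any monomial $q^{\pm}$ and work up to units of $\Z[q,q^{-1}]$; in particular the exponent $h$, which enters the generating function below only through the substitution $x\mapsto q^hx$, does not affect the cyclotomic content. Fix a primitive $d$-th root of unity $\zeta_d$. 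As $\Phi_d$ is the minimal polynomial of $\zeta_d$, it then suffices to prove that the sum $S$ vanishes at $q=\zeta_d$ to order at least $e_d$, where $e_d=\floor{n/d}-\floor{lc/d}$ when $c\mid d$, where $e_d=\floor{n/d}-\floor{r/d}-\floor{(n-r)/d}$ when $d\mid c$ and $d<c$, and where $e_d=0$ otherwise.

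The central tool is the finite $q$-binomial theorem
$$
\sum_{k=0}^{n}(-1)^kq^{\binom k2}\qbinom{n}{k}{q}x^k=\prod_{j=0}^{n-1}(1-q^jx),
$$
which makes high-order vanishing at roots of unity visible: at $q=\zeta_d$ all factors $1-\zeta_d^{\,j}x$ with $j\equiv 0\pmod d$ coincide, so the product already carries a large power of $(1-x)$. First I would recast $S$ as an operation applied to this identity. A $c$-th root of unity filter extracts the class $k\equiv r\pmod c$, while the weight is produced from the companion identity
$$
q^{c\binom l2}\qbinom{(k-r)/c}{l}{q^c}=[y^l]\prod_{0\le i<(k-r)/c}(1+q^{ci}y),
$$
so that, after the substitution $x\mapsto q^hx$ absorbing $h$, the sum $S$ becomes a coefficient of $y^l$ in a combination of shifted products of the type above. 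In this form the repeated linear factors responsible for vanishing at $\zeta_d$ are manifest, and the order can be counted directly.

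The two families of $d$ are then handled separately. When $c\mid d$, write $d=cm$; then $\zeta_d^{\,c}$ is a primitive $m$-th root of unity, the base $q^c$ of the second $q$-binomial becomes a genuine root of unity, and the $q$-Lucas congruence
$$
\qbinom{n}{k}{\zeta_d}=\binom{\floor{n/d}}{\floor{k/d}}\qbinom{n\bmod d}{k\bmod d}{\zeta_d}
$$
splits each term into a macroscopic ordinary binomial and a bounded microscopic factor. Counting the vanishing factors $1-\zeta_d^{\,j+h}x$ for $0\le j<n$ (about $\floor{n/d}$ complete residue blocks modulo $d$) and subtracting the $\floor{l/m}=\floor{lc/d}$ units absorbed by the weight $\qbinom{(k-r)/c}{l}{q^c}$ gives the required order $e_d=\floor{n/d}-\floor{lc/d}$. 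When instead $d\mid c$ with $d<c$, one has $\zeta_d^{\,c}=1$, so $\qbinom{(k-r)/c}{l}{q^c}$ degenerates to the constant ordinary binomial $\binom{(k-r)/c}{l}$ and contributes no vanishing; the whole order then comes from $\qbinom{n}{k}{q}$, and $q$-Lucas reduces $S$ modulo the relevant power of $\Phi_d$ to an ordinary alternating Fleck-type sum times a fixed factor $\qbinom{n\bmod d}{r\bmod d}{\zeta_d}$, yielding the order $\floor{n/d}-\floor{r/d}-\floor{(n-r)/d}$.

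The main obstacle is the precise order bookkeeping in the case $c\mid d$: one must show that the cancellation in the filtered sum is governed exactly by the complete residue blocks of $\{j+h:0\le j<n\}$ modulo $d$, and that the weight $\qbinom{(k-r)/c}{l}{q^c}$ lowers the order by precisely $\floor{lc/d}$, neither more nor less. Separating the contribution of the weight from that of the main product, without losing or gaining order, is the crux. I expect this to require either an induction on $l$, peeling off one factor $1+q^{ci}y$ of the companion product at a time, or a direct analysis of the single incomplete residue block through the microscopic $q$-Lucas factor, and the delicate verification that the incomplete block never contributes spurious extra vanishing is where the real work lies.
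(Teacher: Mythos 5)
Your outer scaffolding is fine and partly coincides with the paper: the reduction via Gauss's lemma to each $\Phi_d^{e_d}$ separately, the root-of-unity filter combined with the finite $q$-binomial theorem (this is exactly the paper's Lemma \ref{L2}, resting on its Lemma \ref{L1}), and the treatment of the small divisors $b\mid c$, $b<c$. For the latter, note that $\floor{n/b}-\floor{r/b}-\floor{(n-r)/b}\in\{0,1\}$ always, so your $q$-Lucas evaluation at $\zeta_b$ (the term $\qbinom{n\bmod b}{r\bmod b}{\zeta_b}$ vanishes precisely when the exponent is $1$) does the same job as the paper's termwise argument, which instead reads the exponent of $\Phi_b$ off the factorization $\qbinom{n}{k}{q}=\prod_{1<d\leq n}\Phi_d(q)^{\floor{n/d}-\floor{k/d}-\floor{(n-k)/d}}$ and uses that this exponent depends only on $k\bmod b=r\bmod b$.

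The main case $c\mid d$ with $l\geq 1$, however, is a genuine gap, and you concede as much. Two concrete failures: (i) your passage from $S$ to ``a coefficient of $y^l$ in a combination of shifted products'' does not go through, because the companion product $\prod_{0\le i<(k-r)/c}(1+q^{ci}y)$ has $k$-dependent length and therefore cannot be pulled through the root-of-unity filter; no evaluated closed-form product survives on which to ``count vanishing factors,'' and the assertion that the weight absorbs exactly $\floor{lc/d}$ units is precisely the statement to be proved, not a byproduct of the count. (ii) The tools you actually invoke at $q=\zeta_d$ ($q$-Lucas, evaluation) only see the sum modulo $\Phi_d$, i.e., first-order vanishing, whereas $e_d=\floor{n/d}-\floor{lc/d}$ is in general large; high-order vanishing must come from divisibility identities of polynomials in $q$ (as in Lemma \ref{L1}, where each linear factor of $(\zeta^tq^h;q)_n$ is shown divisible by the appropriate $1-\zeta^kq$), not from values at $\zeta_d$. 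The paper closes exactly this hole by an induction on $l$ driven by the recurrence
$$
\qbinom{(k-r)/c}{l}{q^{c}}=\frac{q^{-r}([k]_q-[r]_q)}{[c]_q[l]_{q^{c}}}\qbinom{(k-r)/c-1}{l-1}{q^{c}},
$$
absorbing $[k]_q\qbinom{n}{k}{q}=[n]_q\qbinom{n-1}{k-1}{q}$, noting that division by $[lc]_q$ costs exactly $\1_{d\mid lc}$ powers of $\Phi_d$, and balancing the books with $\1_{t\mid s}=\floor{s/t}-\floor{(s-1)/t}$; none of this machinery appears in your sketch. Alternatively, your plan could be rescued without induction: expand $\qbinom{(k-r)/c}{l}{q^c}$ as a $\Q(q)$-linear combination of $q^{s(k-r)}$, $0\leq s\leq l$, with denominators dividing $\prod_{j=1}^{l}(1-q^{cj})$; each resulting filtered sum is your $l=0$ case with shift $h+s$, hence divisible by $\Phi_d(q)^{\floor{n/d}}$, and the denominator costs exactly $\sum_{j=1}^{l}\1_{d\mid cj}=\floor{lc/d}$ powers of $\Phi_d$. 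Some such explicit mechanism must replace your stated ``expectation'' before the argument counts as a proof.
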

Since $\Phi_{p^\alpha}(q)=[p]_{q^{p^{\alpha-1}}}$ for prime $p$, we may get
\begin{align}\label{qc1}
&\sum_{k\equiv r\pmod{p^{\alpha}}}(-1)^kq^{\binom{k}{2}+hk}\qbinom{n}{k}{q}\qbinom{(k-r)/p^\alpha}{l}{q^{p^\alpha}}\notag\\
\equiv&0\pmod{\prod_{j=\alpha}^\infty[p]_{q^{p^{j-1}}}^{\floor{n/p^j}-\floor{l/p^{j-\alpha}}}\cdot\prod_{j=1}^{\alpha-1}[p]_{q^{p^{j-1}}}^{\floor{n/p^j}-\floor{r/p^j}-\floor{(n-r)/p^j}}
}.
\end{align}
Note that
\begin{equation}\label{ndd}
\nu_p(n!)=\sum_{j=1}^{\infty}\bigg\lfloor\frac{n}{p^j}\bigg\rfloor,
\end{equation}
and for $1\leq j\leq\alpha-1$
$$\bigg\lfloor\frac{n}{p^j}\bigg\rfloor-\bigg\lfloor\frac{r}{p^j}\bigg\rfloor-\bigg\lfloor\frac{n-r}{p^j}\bigg\rfloor
=
\bigg\lfloor\frac{\{r\}_{p^{\alpha-1}}+\{n-r\}_{p^{\alpha-1}}}{p^j}\bigg\rfloor-\bigg\lfloor\frac{\{r\}_{p^{\alpha-1}}}{p^j}\bigg\rfloor
-\bigg\lfloor\frac{\{n-r\}_{p^{\alpha-1}}}{p^j}\bigg\rfloor,$$
where $\{r\}_{p^{\alpha-1}}$ denotes the least non-negative residue of $r$ modulo $p^{\alpha-1}$.
Substituting $q=1$ in (\ref{qc1}), we can get the following stronger version of (\ref{sd}) \cite[(1.1)]{SD}:
\begin{align}
\label{sdt}
&\nu_p\bigg(\sum_{k\equiv r\pmod{p^{\alpha}}}(-1)^k\binom{n}{k}\binom{(k-r)/p^\alpha}{l}\bigg)\notag\\
\geqslant&\nu_p(\floor{n/p^{\alpha-1}}!)-l-\nu_p(l!)+\tau_{p}(\{r\}_{p^{\alpha-1}},\{n-r\}_{p^{\alpha-1}}),
\end{align}
where
$$
\tau_p(a,b)=\ord_p\Big(\binom{a+b}{a}\Big).
$$

We shall prove Theorem \ref{qT1} in the next section. For the advantage of $q$-congruences, our proof of Theorem \ref{qT1} is even simpler than the original one of (\ref{sd}).
\begin{Rem}
Quite recently, some Fleck type $q$-congruences also have been established by Schultz and Walker \cite{SW}.
\end{Rem}

\section{Proofs of Theorem \ref{qT1}}
\setcounter{equation}{0}
\setcounter{Thm}{0}
\setcounter{Lem}{0}
\setcounter{Cor}{0}
Let $\Q[q]$ denote the polynomial ring in $q$ with rational coefficients.
Note that the greatest common divisor of all coefficients of $\Phi_d(q)$ is $1$. By a well-known result of Gauss, if $\Phi_d(q)$ divides $F(q)\in\Z[q]$ over $\Q[q]$, then $\Phi_d(q)$ also divides $F(q)$ over $\Z[q]$. So below we don't distinguish the $q$-congruences over $\Z[q]$ and $\Q[q]$.
\begin{Lem}
\label{L1}
$(\zeta^rq^h;q)_n$ is divisible by
$\Phi_{d}(q)^{\floor{n/d}}$ for any $r, s\in\Z$, where $\zeta=e^{2\pi\sqrt{-1}/d}$.
\end{Lem}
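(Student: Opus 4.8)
The plan is to translate the claimed divisibility into a statement about orders of vanishing at roots of unity. Recall that $(a;q)_n=\prod_{i=0}^{n-1}(1-aq^i)$, so that $(\zeta^rq^h;q)_n=\prod_{i=0}^{n-1}(1-\zeta^rq^{h+i})$. Write $F=(\zeta^rq^h;q)_n$. Since $\Phi_d$ is irreducible over $\Q$ with distinct (hence simple) roots, and since $F$ has rational coefficients when $h\geq0$, the exact power of $\Phi_d$ dividing $F$ over $\Q[q]$ equals the order of vanishing of $F$ at any single primitive $d$-th root of unity $\xi$: all such roots are Galois-conjugate, and as $F$ is fixed by the Galois action they share one common vanishing order. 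By the Gauss lemma already invoked in the text, divisibility over $\Q[q]$ upgrades to divisibility over $\Z[q]$. Thus it suffices to exhibit a primitive $d$-th root $\xi$ at which $F$ vanishes to order at least $\floor{n/d}$.

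Next I would compute this order of vanishing factor by factor. Fix $\xi=\zeta^a$ with $\gcd(a,d)=1$. Each factor $1-\zeta^rq^{h+i}$, regarded as a polynomial in $q$, has only simple roots (its derivative $-\zeta^r(h+i)q^{h+i-1}$ is nonzero at the nonzero point $\xi$ whenever $h+i\neq0$), so it contributes $0$ or $1$ to the order of vanishing at $\xi$, contributing $1$ precisely when $\zeta^r\xi^{h+i}=1$, i.e.\ when $\xi^{h+i}=\zeta^{-r}$. Hence
$$\ord_\xi(F)=\#\{\,0\leq i\leq n-1:\ \xi^{h+i}=\zeta^{-r}\,\}=\#\{\,0\leq i\leq n-1:\ a(h+i)\equiv -r\pmod d\,\}.$$
Because $\gcd(a,d)=1$, the congruence $a(h+i)\equiv-r\pmod d$ pins $i$ down to a single residue class modulo $d$, and any residue class meets the block $\{0,1,\dots,n-1\}$ of $n$ consecutive integers in at least $\floor{n/d}$ points. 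Therefore $\ord_\xi(F)\geq\floor{n/d}$, which is exactly the bound needed.

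The argument is short, so the only real care lies in the bookkeeping at the boundary, and that is where I expect the main (if modest) obstacle. One must handle the degenerate case $\zeta^r=1$, i.e.\ $d\mid r$: if moreover $h+i=0$ for some admissible $i$, then the corresponding factor is $1-\zeta^r=0$, so $F$ is the zero polynomial and the divisibility is vacuous; otherwise the count above is unaffected. One should likewise note that the roots of every $\Phi_d$ are nonzero, so if $h<0$ and one clears denominators by multiplying $F$ by a power of $q$, the order of vanishing at $\xi\neq0$ — and hence the $\Phi_d$-valuation — is untouched, and the whole scheme carries over verbatim. Finally, this viewpoint makes transparent why the exponent $\floor{n/d}$ is independent of $r$ and $h$: these parameters only translate which residue class of $i$ is selected, never how densely that class sits inside $\{0,\dots,n-1\}$.
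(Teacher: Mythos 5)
Your counting argument is sound and is in substance identical to the paper's, but the reduction that precedes it contains a genuine error: $F=(\zeta^rq^h;q)_n=\prod_{i=0}^{n-1}(1-\zeta^rq^{h+i})$ does \emph{not} have rational coefficients in general --- its coefficients lie in $\Z[\zeta]$, and for, say, $d=5$, $r=1$, $h=0$, $n=1$ one has $F=1-\zeta q\notin\Q[q]$. Consequently $F$ is not Galois-stable, the vanishing orders at the various primitive $d$-th roots of unity need not agree (that example vanishes at $\zeta^{-1}$ and at no other primitive root), and your step ``it suffices to exhibit one primitive $d$-th root $\xi$ with $\ord_\xi(F)\geq\floor{n/d}$'' is false as stated. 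For the same reason the appeal to Gauss's lemma is misplaced at this point: the divisibility asserted in the lemma must be read over $\Z[\zeta][q]$ (equivalently over $\C[q]$); rationality only enters later, in Lemma 2.2, where averaging $\zeta^{-rt}(\zeta^tq^h;q)_n$ over $t$ produces a polynomial in $\Q[q]$, and only there does the Gauss upgrade to $\Z[q]$ apply.

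Fortunately the flaw is self-healing, because you fixed $\xi=\zeta^a$ with $\gcd(a,d)=1$ \emph{arbitrary}, and your computation $\ord_\xi(F)=\#\{0\leq i\leq n-1:\ a(h+i)\equiv-r\pmod{d}\}\geq\floor{n/d}$ is uniform in $a$. So delete the conjugacy argument and instead conclude from the bound at \emph{every} primitive $d$-th root: since $\Phi_d(q)=\prod_{\xi}(q-\xi)$ has simple roots, so that the linear factors are pairwise coprime, the inequality $\ord_\xi(F)\geq\floor{n/d}$ for all primitive $\xi$ yields $\Phi_d(q)^{\floor{n/d}}\mid F$ over $\C[q]$. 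The repaired proof then coincides with the paper's, which runs the same residue-class count once for each factor $1-\zeta^kq$ of $\Phi_d(q)$ with $(k,d)=1$, showing $(1-\zeta^kq)^{\floor{n/d}}$ divides the product. Your boundary remarks --- the degenerate factor $1-\zeta^r$ when $h+i=0$, and clearing powers of $q$ when $h<0$ without affecting the valuation at the nonzero points $\xi$ --- are correct, and in fact slightly more careful than the paper, which leaves the $h<0$ Laurent case implicit.
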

\begin{proof}
We know that
$$
\Phi_{d}(q)=\prod_{\substack{k=1\\ (k,d)=1}}^{d}(1-\zeta^k q).
$$ For any $k$ with $(k,d)=1$, let
$0\leq e_k<d$ be the integer such that $e_kk\equiv r\pmod{d}$.
Then we have
$
1-\zeta^{r}\zeta^{-e_kk}=0$,
i.e., $1-\zeta^k q$ divides $1-\zeta^{r}q^{j}$ if
$j\equiv e_k\pmod{d}$. Thus
$$
(\zeta^rq^h;q)_n=\prod_{j=h}^{n+h-1}(1-\zeta^rq^j)
$$
is divisible by $(1-\zeta^k q)^{\floor{n/d}}$.
\end{proof}
\begin{Lem}
\label{L2}
\begin{equation}
\label{L2e}
\sum_{k\equiv r\pmod{c}}
(-1)^kq^{\binom{k}{2}+hk}\qbinom{n}{k}{q}\equiv
0\pmod{\prod_{c\mid d}\Phi_{d}(q)^{\floor{n/d}}}.
\end{equation}
\end{Lem}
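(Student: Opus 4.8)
The plan is to realize the lacunary sum as the output of a roots-of-unity filter applied to an explicit generating function, and then to feed each filtered summand into Lemma \ref{L1}. First I would record the finite $q$-binomial theorem
$$
\sum_{k=0}^{n}(-1)^kq^{\binom{k}{2}}\qbinom{n}{k}{q}x^k=\prod_{i=0}^{n-1}(1-xq^i).
$$
Replacing $x$ by $q^hx$ converts the left-hand side into $\sum_{k}a_kx^k$, where $a_k=(-1)^kq^{\binom{k}{2}+hk}\qbinom{n}{k}{q}$, and the right-hand side into the $q$-Pochhammer product $(q^hx;q)_n$. Hence $\sum_{k}a_kx^k=(q^hx;q)_n$, a polynomial identity in $x$ over $\Z[q]$.

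Second, I would isolate the residue class $k\equiv r\pmod c$ by the standard roots-of-unity filter. With $\omega=e^{2\pi\sqrt{-1}/c}$ a primitive $c$-th root of unity, this yields
$$
\sum_{k\equiv r\pmod c}a_k=\frac1c\sum_{t=0}^{c-1}\omega^{-rt}(q^h\omega^t;q)_n,
$$
so the left-hand side of (\ref{L2e}) is exhibited as a $\Q(\omega)$-linear combination of the quantities $(q^h\omega^t;q)_n$.

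Third, I would prove divisibility by each factor $\Phi_d(q)^{\floor{n/d}}$ separately, for $d$ with $c\mid d$ (only finitely many matter, since $\floor{n/d}=0$ once $d>n$). Fix such a $d$ and set $\zeta=e^{2\pi\sqrt{-1}/d}$; because $c\mid d$ we have $\zeta^{d/c}=\omega$, hence $\omega^t=\zeta^{td/c}$ and $(q^h\omega^t;q)_n=(\zeta^{td/c}q^h;q)_n$. By Lemma \ref{L1} each of these is divisible by $\Phi_d(q)^{\floor{n/d}}$, whence so is the $\Q(\zeta)$-linear combination above; this is a divisibility over $\Q(\zeta)[q]$. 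Since both the sum and $\Phi_d(q)^{\floor{n/d}}$ lie in $\Q[q]$, the division algorithm (the quotient and remainder computed in $\Q[q]$ are unchanged over $\Q(\zeta)[q]$) shows the divisibility already holds over $\Q[q]$, and by Gauss's lemma over $\Z[q]$. Finally, since the $\Phi_d(q)$ for distinct $d$ are pairwise coprime irreducibles in $\Q[q]$, these individual divisibilities combine into divisibility by the full product $\prod_{c\mid d}\Phi_d(q)^{\floor{n/d}}$.

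The main obstacle is conceptual rather than computational: one must notice that the filter naturally produces evaluations $(q^h\omega^t;q)_n$ at $c$-th roots of unity, and that the hypothesis $c\mid d$ is precisely what lets one rewrite each $\omega^t$ as a power of a primitive $d$-th root $\zeta$, so that Lemma \ref{L1} applies uniformly to every term in the sum. Everything else—the field-extension descent and the coprimality packaging of the cyclotomic factors—is routine.
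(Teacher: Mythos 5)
Your proposal is correct and follows essentially the same route as the paper: the finite $q$-binomial theorem turns the lacunary sum into a roots-of-unity filter $\frac1c\sum_{t=0}^{c-1}\omega^{-rt}(\omega^tq^h;q)_n$, and each term is handled by Lemma \ref{L1} after writing $\omega^t$ as a power of the primitive $d$-th root of unity (the paper's remark ``$\zeta$ is also a $d$-th root of unity if $c\mid d$''). The only difference is that you spell out the descent from $\Q(\zeta)[q]$ to $\Q[q]$, the Gauss-lemma step, and the pairwise coprimality of the $\Phi_d$, which the paper dispatches with its blanket remark at the start of Section 2.
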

\begin{proof}
In view of the $q$-binomial theorem (cf. \cite[Corollary 10.2.2(c)]{AAR}),
$$
\sum_{k=0}^n(-1)^kq^{\binom{k}{2}}\qbinom{n}{k}{q}x^k=(x;q)_n.
$$
So letting $\zeta=e^{2\pi\sqrt{-1}/c}$,
\begin{align*}
\sum_{k\equiv r\pmod{c}}
(-1)^kq^{\binom{k}{2}+hk}\qbinom{n}{k}{q}=&\frac{1}{c}\sum_{k=0}^n
(-1)^kq^{\binom{k}{2}+hk}\qbinom{n}{k}{q}\sum_{t=0}^{c-1}\zeta^{(k-r)t}\\
=&\frac{1}{c}\sum_{t=0}^{c-1}\zeta^{-rt}(\zeta^tq^h;q)_n.
\end{align*}
Thus (\ref{L2e}) immediately follows from Lemma \ref{L1}, since $\zeta$ is also a $d$-th root of unity if $c\mid d$.
\end{proof}
\begin{Lem}
\label{L3}
$$
\qbinom{n}{k}{q}=\prod_{1<d\leqslant n}\Phi_d(q)^{\floor{n/d}-\floor{k/d}-\floor{(n-k)/d}}.
$$
\end{Lem}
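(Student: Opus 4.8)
The plan is to reduce everything to the cyclotomic factorization of the $q$-integers and then simply count multiplicities. First I would rewrite the $q$-binomial coefficient in the symmetric form as a ratio of $q$-factorials: multiplying numerator and denominator of the given definition by $[n-k]_q\cdots[1]_q$, one has
$$
\qbinom{n}{k}{q}=\frac{[n]_q[n-1]_q\cdots[1]_q}{\big([k]_q\cdots[1]_q\big)\big([n-k]_q\cdots[1]_q\big)}.
$$
So it suffices to determine, for each $d>1$, the exact power of $\Phi_d(q)$ dividing a $q$-factorial $[m]_q!:=[m]_q[m-1]_q\cdots[1]_q$.

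The key input is the classical identity $1-q^m=\prod_{d\mid m}\Phi_d(q)$ together with $\Phi_1(q)=1-q$. Dividing, I obtain the factorization of a single $q$-integer,
$$
[m]_q=\frac{1-q^m}{1-q}=\prod_{\substack{d\mid m\\ d>1}}\Phi_d(q).
$$
Hence, for a fixed $d>1$, the exponent of $\Phi_d(q)$ in $[m]_q!$ equals the number of indices $j\in\{1,\dots,m\}$ divisible by $d$, namely $\floor{m/d}$. This gives $[m]_q!=\prod_{d>1}\Phi_d(q)^{\floor{m/d}}$, a finite product since $\floor{m/d}=0$ once $d>m$. Substituting $m=n,k,n-k$ into the symmetric expression and collecting exponents yields
$$
\qbinom{n}{k}{q}=\prod_{d>1}\Phi_d(q)^{\floor{n/d}-\floor{k/d}-\floor{(n-k)/d}}.
$$
Finally, for $d>n$ each of $\floor{n/d}$, $\floor{k/d}$, $\floor{(n-k)/d}$ vanishes, so the product may be truncated to $1<d\leqslant n$, which is exactly the asserted formula.

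I do not expect a serious obstacle here; the only point deserving a word is that the displayed exponents are genuinely nonnegative, so that the right-hand side is a product of polynomials rather than a quotient. This is immediate from the subadditivity inequality $\floor{x+y}\geqslant\floor{x}+\floor{y}$ applied with $x=k/d$ and $y=(n-k)/d$; alternatively, since $\qbinom{n}{k}{q}\in\Z[q]$ and the distinct $\Phi_d(q)$ are content-$1$ irreducibles in $\Z[q]$, each must occur to a nonnegative power. Either remark closes the argument.
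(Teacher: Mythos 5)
Your proof is correct and follows essentially the same route as the paper: both write $\qbinom{n}{k}{q}$ as a ratio of $q$-factorials, use $[m]_q=\prod_{d\mid m,\,d>1}\Phi_d(q)$ to get $[m]_q!=\prod_{1<d\leqslant m}\Phi_d(q)^{\floor{m/d}}$, and subtract exponents. Your closing remark on the nonnegativity of the exponents is a small extra check the paper leaves implicit, but the argument is the same.
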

\begin{proof}
Clearly
\begin{equation}\label{nd}
[n]_q!=\prod_{j=1}^n[j]_q=\prod_{j=1}^n\prod_{\substack{d>1\\ d\mid j}}\Phi_d(q)=\prod_{1<d\leqslant n}\Phi_d(q)^{\floor{n/d}}.
\end{equation}
Hence
$$
\qbinom{n}{k}{q}=\frac{[n]_q!}{[k]_![n-k]_q!}=\prod_{1<d\leqslant n}\Phi_d(q)^{\floor{n/d}-\floor{k/d}-\floor{(n-k)/d}}.
$$
\end{proof}
\begin{proof}[Proof of Theorem \ref{qT1}]
We shall prove
\begin{equation}
\label{T1e}
\sum_{k\equiv r\pmod{c}}
(-1)^kq^{\binom{k}{2}}\qbinom{n}{k}{q}\qbinom{(k-r)/c}{l}{q^{c}}
\equiv 0\pmod{\prod_{c\mid d}\Phi_d(q)^{\floor{n/d}-\floor{lc/d}}}
\end{equation}
by using an induction on $l$. The case $l=0$ follows from Lemma \ref{L2}. Assume that $l\geqslant 1$ and (\ref{T1e}) holds for the smaller values of $l$. Compute
\begin{align*}
&\sum_{k\equiv r\pmod{c}}
(-1)^kq^{\binom{k}{2}+hk}\qbinom{n}{k}{q}\qbinom{(k-r)/c}{l}{q^{c}}\\
=&\sum_{k\equiv r\pmod{c}}
(-1)^kq^{\binom{k}{2}+hk}\qbinom{n}{k}{q}\cdot\frac{q^{-r}([k]_q-[r]_q)}{[c]_q[l]_{q^{c}}}\cdot\qbinom{(k-r)/c-1}{l-1}{q^{c}}\\
=&\frac{q^{-r}[n]_q}{[lc]_q}\sum_{k\equiv r\pmod{c}}
(-1)^kq^{\binom{k}{2}+hk}\qbinom{n-1}{k-1}{q}\qbinom{(k-r-c)/c}{l-1}{q^{c}}\\
&-\frac{q^{-r}[r]_q}{[lc]_q}\sum_{k\equiv r\pmod{c}}
(-1)^kq^{\binom{k}{2}+hk}\qbinom{n}{k}{q}\qbinom{(k-r-c)/c}{l-1}{q^{c}}.
\end{align*}
Note that
$[lc]_{q}$ is divisible by or prime to $\Phi_{d}(q)$
according to whether $d\mid lc$ or not. By the induction hypothesis, we obtain that
\begin{align*}
&\frac{q^{-r}[n]_q}{[lc]_q}\sum_{k\equiv r\pmod{c}}
(-1)^kq^{\binom{k}{2}+hk}\qbinom{n-1}{k-1}{q}\qbinom{(k-r-c)/c}{l-1}{q^{c}}\\
=&\frac{q^{s-r}[n]_q}{[lc]_q}\sum_{k\equiv r-1\pmod{c}}
(-1)^kq^{\binom{k}{2}+(h+1)k}\qbinom{n-1}{k}{q}\qbinom{(k-(r-1+c))/c}{l-1}{q^{c}}\\
\equiv&0\pmod{\prod_{c\mid d}\Phi_{d}(q)^{\floor{(n-1)/d}+\1_{d\mid n}-\floor{(l-1)c/d}-\1_{d\mid lc}}},
\end{align*}
and
\begin{align*}
&\frac{q^{-r}[r]_q}{[lc]_q}\sum_{k\equiv r\pmod{c}}
(-1)^kq^{\binom{k}{2}+hk}\qbinom{n}{k}{q}\qbinom{(k-(r+c))/c}{l-1}{q^{c}}\\
\equiv&0\pmod{\prod_{c\mid d}\Phi_{d}(q)^{\floor{n/d}-\floor{(l-1)c/d}-\1_{d\mid lc}}},
\end{align*}
where for an assertion $A$ we adopt the notation
$
\1_{A}=1$ or $0$ according to whether $A$ holds or not.
Thus by noting that for arbitrary positive integers $s$ and $t$
$$
\1_{t\mid s}=\biggfloor{\frac{s}{t}}-\biggfloor{\frac{s-1}{t}},
$$
(\ref{T1e}) is concluded.

On the other hand, with help of Lemma \ref{L3},
$$
\qbinom{n}{k}{q}\equiv 0\pmod{\prod_{b\mid c}\Phi_{b}(q)^{\floor{n/b}-\floor{r/b}-\floor{(n-r)/b}}}
$$
whenever $k\equiv r\pmod{c}$, since for any $b\mid c$
$$
\biggfloor{\frac{n}{b}}-\biggfloor{\frac{k}{b}}-\biggfloor{\frac{n-k}{b}}=\biggfloor{\frac{n}{b}}-\biggfloor{\frac{r}{b}}-\biggfloor{\frac{n-r}{b}}.
$$
All are done.\end{proof}
It is easy to check that
$$
[l]_{q^c}[l-1]_{q^c}\cdots[2]_{q^c}[1]_{q^c}=\prod_{1<j\leq l}\Phi_j(q^c)^{\floor{l/j}}\equiv0\pmod{\prod_{\substack{c\mid d\\ d>c}}\Phi_d(q)^{\floor{lc/d}}}
$$
and
$$
[k]_q[k-1]_q\cdots[k-l+1]_q=q^{-\binom{l}{2}}[k]_q([k]_q-[1]_q)\cdot([k]_q-[l-1]_q).
$$
So applying a simple induction on $l$, we can deduce the $q$-analogue of (\ref{ds}):
\begin{Cor}\label{c1}
$$
\sum_{k\equiv r\pmod{c}}(-1)^kq^{\binom{k}{2}+hk}\qbinom{n}{k}{q}[(k-r)/c]_{q^c}^l.
$$
is divisible by
$$
\Phi_c(q)^{\floor{n/c}-l}\prod_{\substack{c\mid d\\ d>c}}\Phi_d(q)^{\floor{n/d}}\cdot\prod_{\substack{b\mid c\\ b<c}}\Phi_b(q)^{\floor{n/b}-\floor{r/b}-\floor{(n-r)/b}}.
$$
\end{Cor}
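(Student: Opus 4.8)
The plan is to deduce the corollary from Theorem~\ref{qT1} by expanding the power $[(k-r)/c]_{q^c}^l$ into a $\Z[q]$-combination of $q$-falling factorials, each of which is a $q^c$-binomial coefficient up to a $q$-factorial, so that the sum in question becomes an explicit $\Z[q]$-combination of the very sums already controlled by Theorem~\ref{qT1}. Throughout I write $m=(k-r)/c$ and treat $x=[m]_{q^c}$ as a formal variable, so that the expansion is performed once and for all, independently of $k$.

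Concretely, I would first set $\pi_j(x)=\prod_{i=0}^{j-1}(x-[i]_{q^c})$ for $0\le j\le l$. Since each $[i]_{q^c}\in\Z[q]$, these are monic of degree $j$ with coefficients in $\Z[q]$, so a trivial unitriangular change of basis yields $x^l=\sum_{j=0}^l a_j(q)\,\pi_j(x)$ with all $a_j(q)\in\Z[q]$ and $a_l(q)=1$. The second displayed identity following the proof of Theorem~\ref{qT1}, applied with base $q^c$, evaluates $\pi_j$ at $x=[m]_{q^c}$ as $\pi_j([m]_{q^c})=q^{c\binom{j}{2}}[m]_{q^c}[m-1]_{q^c}\cdots[m-j+1]_{q^c}=q^{c\binom{j}{2}}[j]_{q^c}!\,\qbinom{m}{j}{q^c}$, the last step being the definition of the $q^c$-binomial coefficient. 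Substituting and interchanging the two summations, the corollary's sum becomes $\sum_{j=0}^l a_j(q)q^{c\binom{j}{2}}[j]_{q^c}!\,T_j$, where $T_j=\sum_{k\equiv r\pmod c}(-1)^kq^{\binom{k}{2}+hk}\qbinom{n}{k}{q}\qbinom{(k-r)/c}{j}{q^c}$ is precisely the sum to which Theorem~\ref{qT1} applies with $l$ replaced by $j$.

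It then remains to track cyclotomic exponents factor by factor, which is where the real work lies. Theorem~\ref{qT1} gives $\Phi_d(q)^{\floor{n/d}-\floor{jc/d}}\mid T_j$ for all $c\mid d$ and $\Phi_b(q)^{\floor{n/b}-\floor{r/b}-\floor{(n-r)/b}}\mid T_j$ for all $b\mid c$ with $b<c$, while the first displayed identity following the proof of Theorem~\ref{qT1} shows that $[j]_{q^c}!$ supplies a factor $\Phi_d(q)^{\floor{jc/d}}$ for each $c\mid d$ with $d>c$ and contributes nothing to $\Phi_c(q)$. Combining these, and using that the weights $a_j(q)q^{c\binom{j}{2}}\in\Z[q]$ only multiply while $q$ is coprime to every $\Phi_d(q)$, each $j$-th summand is divisible by $\Phi_c(q)^{\floor{n/c}-j}\prod_{c\mid d,\,d>c}\Phi_d(q)^{\floor{n/d}}\prod_{b\mid c,\,b<c}\Phi_b(q)^{\floor{n/b}-\floor{r/b}-\floor{(n-r)/b}}$: for $d>c$ the deficit $-\floor{jc/d}$ from $T_j$ is exactly cancelled by the factorial, and for $b<c$ the exponent was already independent of $j$. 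The only surviving $j$-dependence is the exponent $\floor{n/c}-j$ of $\Phi_c(q)$, which is smallest at $j=l$; taking that worst case over all summands yields the divisor claimed in the corollary. I expect the main obstacle to be exactly this exponent bookkeeping — checking that $[j]_{q^c}!$ cancels the deficit at every $d>c$ while leaving $\floor{n/c}-l$ as the governing exponent at $d=c$ — rather than anything in the algebraic reduction, which is purely formal.
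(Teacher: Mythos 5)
Your proof is correct and is essentially the paper's own argument made explicit: the paper deduces Corollary~\ref{c1} from Theorem~\ref{qT1} using exactly your two ingredients --- the factorization $[l]_{q^c}!=\prod_{1<j\leq l}\Phi_j(q^c)^{\floor{l/j}}$, which supplies $\Phi_d(q)^{\floor{lc/d}}$ for each $d>c$ with $c\mid d$ and nothing at $\Phi_c(q)$, and the falling-factorial identity $[m]_{q^c}\cdots[m-j+1]_{q^c}=q^{-c\binom{j}{2}}\prod_{i=0}^{j-1}([m]_{q^c}-[i]_{q^c})$ --- combined with ``a simple induction on $l$,'' which is precisely your unitriangular change of basis expressing $[m]_{q^c}^l$ as a $\Z[q]$-combination of $q^{c\binom{j}{2}}[j]_{q^c}!\qbinom{m}{j}{q^c}$. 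Your exponent bookkeeping (cancellation of the deficit $\floor{jc/d}$ at every $d>c$, the $j$-independent exponent at each $b\mid c$ with $b<c$, and the worst case $\floor{n/c}-l$ at $d=c$) is exactly the intended verification, so nothing is missing.
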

In particular, for prime $p$,
$$
\sum_{k\equiv r\pmod{p^{\alpha}}}(-1)^kq^{\binom{k}{2}+hk}\qbinom{n}{k}{q}[(k-r)/p^\alpha]_q^l
$$
is divisible by
$$
[p]_{q^{p^{\alpha-1}}}^{\floor{n/p^\alpha}-l}\prod_{j=\alpha+1}^\infty[p]_{q^{p^{j-1}}}^{\floor{n/p^j}}\cdot\prod_{j=1}^{\alpha-1}[p]_{q^{p^{j-1}}}^{\floor{n/p^j}-\floor{r/p^j}-\floor{(n-r)/p^j}}.
$$
Substituting $q=1$, we obtain the improvement of (\ref{ds}) \cite[Theorem 5.1]{DS}:
\begin{align}
\label{dst}
&\nu_p\bigg(\sum_{k\equiv r\pmod{p^{\alpha}}}(-1)^k\binom{n}{k}((k-r)/p^\alpha)^l\bigg)\notag\\
\geqslant&\max\{\nu_p(\floor{n/p^{\alpha-1}}!)-l,\nu_p(\floor{n/p^{\alpha}}!)\}+\tau_{p}(\{r\}_{p^{\alpha-1}},\{n-r\}_{p^{\alpha-1}}).
\end{align}

\section{Lucas type and Wolstenholme-Ljunggren type $q$-congruences}
\setcounter{equation}{0}
\setcounter{Thm}{0}
\setcounter{Lem}{0}
\setcounter{Cor}{0}

Let
$$
T_{p^\alpha,l}(n,r)=\frac{l!p^l}{\floor{n/p^{\alpha-1}}!}\sum_{k\equiv r\pmod{p^{\alpha}}}
(-1)^k\binom{n}{k}\binom{(k-r)/p^\alpha}{l}.
$$
In \cite{SD}, Sun and Davis established the following Lucas type congruence:
\begin{equation}\label{sdtp}
T_{p^{\alpha+1},l}(pn+s,pr+t)\equiv(-1)^t\binom{s}{t}T_{p^{\alpha},l}(n,r)\pmod{p},
\end{equation}
where $p$ is a prime, $\alpha\geq 1$, $n, r\geq 0$ and $0\leqslant s ,t\leqslant p-1$.
Now we may give a $q$-analogue of (\ref{sdtp}). For $b,c\geq 1$ with $b\mid c$, define
$$
T_{c,l}^{(b)}(n,r;q)=\frac{[l]_{q^{c}}!\Phi_c(q)^l}{[\floor{nb/c}]_{q^{c/b}}!}\sum_{k\equiv r\pmod{c}}
(-1)^kq^{\binom{k}{2}}\qbinom{n}{k}{q}\qbinom{(k-r)/c}{l}{q^{c}},
$$
where $[n]_q!=[n]_q[n-1]_q\cdots[1]_q$.
\begin{Thm} Let $b\geq 2$ and $n, r, s, t\geq 0$ be integers with $0\leqslant s ,t\leqslant b-1$.
Suppose that $c$ is a positive multiple of $b$. Then
\begin{equation}
\label{qlsdc}
T_{bc,l}^{(b)}(bn+s,br+t;q)\equiv(-1)^tq^{\binom{t}{2}}\qbinom{s}{t}{q}T_{c,l}^{(b)}(n,r;q^b)\pmod{\Phi_b(q)}.
\end{equation}
\end{Thm}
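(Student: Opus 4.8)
The plan is to reduce the congruence modulo $\Phi_b(q)$ to an identity of complex numbers obtained by evaluating at a primitive $b$-th root of unity, and then to feed that root into the $q$-Lucas theorem. Put $\zeta=e^{2\pi\sqrt{-1}/b}$. Since $\Phi_b(q)$ is irreducible over $\Q$, for polynomials (or rational functions with denominators prime to $\Phi_b(q)$) a congruence $A\equiv B\pmod{\Phi_b(q)}$ is equivalent to $A(\zeta)=B(\zeta)$. The only denominators appearing in $T^{(b)}_{bc,l}(bn+s,br+t;q)$ and in $T^{(b)}_{c,l}(n,r;q^b)$ are of the form $[\floor{(bn+s)/c}]_{q^c}!$ and $[\floor{nb/c}]_{q^c}!$; because $b\mid c$ we have $\zeta^c=1$, so each factor $[j]_{\zeta^c}=j\neq0$, and evaluation at $\zeta$ is legitimate.

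First I would record the cyclotomic identity $\Phi_c(q^b)=\Phi_{bc}(q)$ valid whenever $b\mid c$. A root of $\Phi_c(q^b)$ is a $q$ such that $q^b$ is a primitive $c$-th root of unity; if $q$ is a primitive $e$-th root of unity this forces $e/\gcd(e,b)=c$, and $b\mid c$ leaves only $e=bc$, so $\Phi_c(q^b)$ is a power of $\Phi_{bc}(q)$. Comparing degrees through the totient identity $\phi(bc)=b\,\phi(c)$ (which holds since $\gcd(b,c)=b$) shows the exponent is $1$. Combined with $[l]_{q^{bc}}!=[l]_{(q^b)^c}!$ and with $\floor{(bn+s)/c}=\floor{bn/c}$ (valid because $0\le s\le b-1<c$, while $bn\bmod c$ is a multiple of $b$ at most $c-b$, so no carry occurs), this identity shows that the normalizing prefactors of $T^{(b)}_{bc,l}(bn+s,br+t;q)$ and of $T^{(b)}_{c,l}(n,r;q^b)$ coincide as rational functions. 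That common prefactor has value $\Phi_c(1)^l\,l!/\floor{bn/c}!\neq0$ at $q=\zeta$, hence is a unit modulo $\Phi_b(q)$, so it suffices to prove the congruence for the two bare lacunary sums:
\begin{align*}
&\sum_{k\equiv br+t\pmod{bc}}(-1)^kq^{\binom k2}\qbinom{bn+s}{k}{q}\qbinom{(k-br-t)/(bc)}{l}{q^{bc}}\\
&\qquad\equiv (-1)^tq^{\binom t2}\qbinom{s}{t}{q}\sum_{k'\equiv r\pmod{c}}(-1)^{k'}q^{b\binom{k'}2}\qbinom{n}{k'}{q^b}\qbinom{(k'-r)/c}{l}{q^{bc}}\pmod{\Phi_b(q)}.
\end{align*}

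The heart is to verify this at $q=\zeta$. Here $k\equiv br+t\pmod{bc}$ forces $k=bj+t$ with $j\equiv r\pmod{c}$, and each such $j$ contributes exactly one term, so $j$ is the natural index matching $k'$. For such $k$ the $q$-Lucas theorem gives $\qbinom{bn+s}{bj+t}{\zeta}=\binom{n}{j}\qbinom{s}{t}{\zeta}$, while $\zeta^{bc}=1$ turns $\qbinom{(k-br-t)/(bc)}{l}{\zeta^{bc}}$ into $\binom{(j-r)/c}{l}$, and on the right $\zeta^b=1$ reduces $q^{b\binom{k'}2}$, $\qbinom{n}{k'}{q^b}$, $\qbinom{(k'-r)/c}{l}{q^{bc}}$ to $1$, $\binom{n}{k'}$, $\binom{(k'-r)/c}{l}$. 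It remains to compute the twist $(-1)^k\zeta^{\binom k2}$: writing $\binom{bj+t}{2}=\binom{bj}{2}+bjt+\binom{t}{2}$ and using $\zeta^{bjt}=1$, I reduce to $(-1)^{bj}\zeta^{\binom{bj}2}$; evaluating $\prod_{i=0}^{b-1}(x-\zeta^i)=x^b-1$ at $x=0$ gives $\prod_{i=0}^{b-1}(-\zeta^i)=-1$, whence $(-1)^{bj}\zeta^{\binom{bj}2}=\prod_{i=0}^{bj-1}(-\zeta^i)=(-1)^j$, so $(-1)^k\zeta^{\binom k2}=(-1)^{j+t}\zeta^{\binom t2}$. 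Substituting, the left side collapses to $(-1)^t\zeta^{\binom t2}\qbinom{s}{t}{\zeta}\sum_{j\equiv r\pmod{c}}(-1)^j\binom{n}{j}\binom{(j-r)/c}{l}$, which is precisely the right side at $q=\zeta$.

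I expect the main obstacle to be the bookkeeping in this last step: isolating the $q$-Lucas split $\qbinom{bn+s}{bj+t}{\zeta}=\binom{n}{j}\qbinom{s}{t}{\zeta}$ while simultaneously decomposing the Gauss factor $(-1)^kq^{\binom k2}$, so that the extracted constant is exactly $(-1)^tq^{\binom t2}\qbinom{s}{t}{q}$ and the residual sum over $j$ is the reduction of $T^{(b)}_{c,l}(n,r;q^b)$. The identity $\Phi_{bc}(q)=\Phi_c(q^b)$ is what guarantees the two normalizations agree; once it and the twist computation are in hand, the remainder is formal.
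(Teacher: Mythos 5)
Your proposal is correct and is essentially the paper's own proof: both arguments rest on the same ingredients --- the $q$-Lucas theorem applied after reindexing $k=bj+t$, the identity $\Phi_{bc}(q)=\Phi_c(q^b)$ for $b\mid c$, the invertibility of the normalizing factors $[j]_{q^{c}}$ modulo $\Phi_b(q)$, and the twist identity $(-1)^{bj+t}q^{\binom{bj+t}{2}}\equiv(-1)^{j+t}q^{\binom{t}{2}}\pmod{\Phi_b(q)}$ --- and your evaluation at $\zeta=e^{2\pi\sqrt{-1}/b}$ is just the standard equivalent reformulation of congruence modulo the irreducible polynomial $\Phi_b(q)$ in the localization at $(\Phi_b(q))$. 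The only genuine (minor) improvements are your uniform proof of the twist identity via $\prod_{i=0}^{b-1}(-\zeta^i)=-1$, which avoids the paper's separate treatment of even $b$ through $q^{b/2}\equiv-1\pmod{\Phi_b(q)}$, and your explicit verification of $\floor{(bn+s)/c}=\floor{bn/c}$, which the paper uses silently.
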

\begin{proof}
By the $q$-Lucas congruence (cf. \cite[Proposition 2.2]{D}), we have
\begin{equation}
\label{T2e1}
\qbinom{bn+s}{br+t}{q}\equiv\binom{n}{r}\qbinom{s}{t}{q}\pmod{\Phi_b(q)}.
\end{equation}
Since $q^b\equiv1\pmod{\Phi_b(q)}$,
(\ref{T2e1}) can be rewritten as
\begin{equation}
\label{T2e2}
\qbinom{bn+s}{br+t}{q}\equiv\qbinom{n}{r}{q^b}\qbinom{s}{t}{q}\pmod{\Phi_b(q)}.
\end{equation}
On the other hand, we have
\begin{equation}\label{qd}
(-1)^{bk+t}q^{\binom{bk+t}{2}}
\equiv
(-1)^{k+t}q^{\binom{t}{2}}\pmod{\Phi_b(q)}.
\end{equation}
In fact, since $q^{\binom{bk+t}{2}}=q^{bk(bk+2t-1)/2+\binom{t}{2}}$, (\ref{qd}) easily follows when $b$ is odd. And
if $b$ is even, then
$$
q^{b/2}=\frac{1-q^b}{1-q^{b/2}}-1\equiv-1\pmod{\Phi_b(q)}.
$$
Thus (\ref{qd}) is also valid for even $b$.
Since $b\mid c$, it is not difficult to see that $\Phi_{bc}(q)=\Phi_{c}(q^b)$.
Also, $[j]_{q^{c}}=(1-q^{jc})/(1-q^c)$ is prime to $\Phi_b(q)$ for any $j\geq 1$.
Hence,
\begin{align*}
&T_{bc,l}^{(b)}(bn+s,br+t;q)\\
=&\frac{[l]_{q^{bc}}!\Phi_{bc}(q)^l}{[\floor{(bn+s)/c}]_
{q^{(bc)/b}}!}\sum_{\substack{bk+t\equiv br+t\\\pmod{bc}}}
(-1)^{bk+t}q^{\binom{bk+t}{2}}\qbinom{bn+s}{bk+t}{q}\qbinom{((bk+t)-(br+t))/(bc)}{l}{q^{bc}}\\
\equiv&\frac{[l]_{(q^{b})^c}!\Phi_c(q^b)^l}{[\floor{nb/c}]_
{(q^{b})^{c/b}}!}\sum_{k\equiv r\pmod{c}}
(-1)^{k+t}q^{b\binom{k}{2}+\binom{t}{2}}\qbinom{n}{k}{q^b}\qbinom{s}{t}{q}\qbinom{(k-r)/c}{l}{q^{bc}}\\
=&(-1)^tq^{\binom{t}{2}}\qbinom{s}{t}{q}T_{c,l}^{(b)}(n,r;q^b)\pmod{\Phi_b(q)}.
\end{align*}
\end{proof}
Furthermore, define
$$
T_{c,l}^{(b)}(n,r;q,z)=\frac{[l]_{q^{c}}!\Phi_c(q)^l}{[\floor{nb/c}]_{q^{c/b}}!}\sum_{k\equiv r\pmod{c}}
(-1)^kz^kq^{\binom{k}{2}}\qbinom{n}{k}{q}\qbinom{(k-r)/c}{l}{q^{c}}.
$$
Then we also have
\begin{equation}
T_{bc,l}^{(b)}(bn+s,br+t;q,z)\equiv(-1)^tz^tq^{\binom{t}{2}}\qbinom{s}{t}{q}T_{c,l}^{(b)}(n,r;q^b,z^b)\pmod{\Phi_b(q)}.
\end{equation}

Below we consider the special case that $s=t=0$. We need the following $q$-analogue of the Wolstenholme-Ljunggren congruence.
\begin{Lem}
\begin{equation}\label{wlq}\qbinom{bn}{bm}q\bigg/\qbinom{n}{m}{q^b}\equiv
\big((-1)^{b-1}q^{\binom{b}{2}}\big)^{(n-m)m}+\frac{(b^2-1)nm(n-m)}{24}(1-q^{b})^2\pmod{\Phi_b(q)^3}.\end{equation}
\end{Lem}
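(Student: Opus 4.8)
The plan is to reduce the quotient to a product of ``non-$b$-divisible'' $q$-integers and then expand it to second order in $\epsilon:=q^b-1$. Throughout I work in the localization of $\Z[q]$ at the ideal $(\Phi_b)$, in which every $[j]_q$ with $b\nmid j$ is a unit (since $b\nmid j$ means $\Phi_b\nmid 1-q^j$). Because $q^b-1=\prod_{d\mid b}\Phi_d(q)$ contains $\Phi_b$ to the first power, $\epsilon$ equals $\Phi_b$ times a unit, so reducing modulo $\Phi_b^3$ is the same as discarding all terms of order $\epsilon^3$. First I would separate, in $[bN]_q!=\prod_{j=1}^{bN}[j]_q$, the factors with $b\mid j$ from the rest: using $[ib]_q=[i]_{q^b}[b]_q$ one gets $[bN]_q!=[b]_q^N[N]_{q^b}!\,P(N)$, where $P(N)=\prod_{1\le j\le bN,\ b\nmid j}[j]_q$. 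Feeding this into the three factorials of $\qbinom{bn}{bm}q$, the powers of $[b]_q$ cancel and the $q^b$-factorials assemble into $\qbinom{n}{m}{q^b}$, leaving
$$
\qbinom{bn}{bm}q\bigg/\qbinom{n}{m}{q^b}=\frac{P(n)}{P(m)P(n-m)},\qquad P(N)=\prod_{i=0}^{N-1}B_i,\quad B_i=\prod_{a=1}^{b-1}[ib+a]_q.
$$

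Next I would take logarithms. Writing $q^{ib}=(1+\epsilon)^i$ and $c_a=q^a/(1-q^a)$ gives
$$
\log\frac{B_i}{B_0}=\sum_{a=1}^{b-1}\log\bigl(1-c_a((1+\epsilon)^i-1)\bigr)=-iS_1\epsilon-\Bigl(\tbinom{i}{2}S_1+\tfrac{i^2}{2}S_2\Bigr)\epsilon^2+O(\epsilon^3),
$$
where $S_1=\sum_{a=1}^{b-1}c_a$ and $S_2=\sum_{a=1}^{b-1}c_a^2$. Summing over $0\le i<N$ with $\sum i=\binom N2$, $\sum\binom i2=\binom N3$, $\sum i^2=\sigma_2(N):=\sum_{i=0}^{N-1}i^2$, and then forming $\log P(n)-\log P(m)-\log P(n-m)$, the identities $\binom n2-\binom m2-\binom{n-m}2=m(n-m)$, $\binom n3-\binom m3-\binom{n-m}3=\tfrac12 m(n-m)(n-2)$, and $\sigma_2(n)-\sigma_2(m)-\sigma_2(n-m)=m(n-m)(n-1)$ collapse the answer to an explicit multiple of $m(n-m)$ times a combination of $S_1,S_2$.

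The arithmetic heart, and the step I expect to be the main obstacle, is the evaluation of the power sums to the required precision: $S_2$ is needed only modulo $\Phi_b$, but $S_1$ is needed modulo $\Phi_b^2$, since it appears multiplied by a single $\epsilon$. The leading values come from the logarithmic derivative of $(x^b-1)/(x-1)=\prod_{a=1}^{b-1}(x-\zeta^a)$ at $x=1$ (where $\zeta$ is a primitive $b$-th root of unity), which gives $\sum(1-\zeta^a)^{-1}=\tfrac{b-1}2$ and $\sum(1-\zeta^a)^{-2}=\tfrac{(b-1)(5-b)}{12}$, whence $S_1\equiv-\tfrac{b-1}2$ and $S_2\equiv\tfrac{(b-1)(5-b)}{12}\pmod{\Phi_b}$. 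The genuinely new input is the second-order term of $S_1$, which I would extract by pairing $a\leftrightarrow b-a$: from $q^{b-a}=(1+\epsilon)q^{-a}$ one obtains $\tfrac{1}{1-q^a}+\tfrac{1}{1-q^{b-a}}=1+\tfrac{q^a}{(1-q^a)^2}\epsilon+O(\epsilon^2)$, and summing yields $S_1\equiv-\tfrac{b-1}2-\tfrac{b^2-1}{24}\epsilon\pmod{\Phi_b^2}$. Parity causes no trouble here, as the pairing is valid for every $a$, the self-paired term $a=b/2$ being controlled by $q^{b/2}\equiv-1$.

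Finally I would expand $\lambda=(-1)^{b-1}q^{\binom b2}$, for which the clean observation is that, uniformly in the parity of $b$,
$$
\lambda\equiv(1+\epsilon)^{(b-1)/2}\pmod{\Phi_b^3},
$$
the right side being read through the binomial series. For odd $b$ the exponent is an integer; for even $b$ one writes $\binom b2=\tfrac b2+b\tfrac{b-2}2$ and uses that $q^{b/2}$ is the square root of $1+\epsilon$ congruent to $-1$, i.e. $q^{b/2}\equiv-(1+\epsilon)^{1/2}$ (unique since $2$ is invertible and $\epsilon$ is nilpotent). Hence $\log\lambda=\tfrac{b-1}2\epsilon-\tfrac{b-1}4\epsilon^2+O(\epsilon^3)$. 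Substituting the power-sum values into the block computation gives
$$
\log\frac{P(n)}{P(m)P(n-m)}-m(n-m)\log\lambda\equiv\frac{(b^2-1)nm(n-m)}{24}\,\epsilon^2\pmod{\Phi_b^3},
$$
the $\epsilon^1$ terms cancelling exactly. Exponentiating, noting that $\epsilon^2\equiv0\pmod{\Phi_b^2}$ and $\lambda^{m(n-m)}\equiv1\pmod{\Phi_b}$ so that the surviving correction is unchanged upon multiplication by $\lambda^{m(n-m)}$, and rewriting $\epsilon^2=(1-q^b)^2$, yields exactly the claimed congruence.
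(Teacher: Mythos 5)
Your proof is correct, but at the decisive step it takes a genuinely different route from the paper's. The outer skeleton is shared: your reduction $\qbinom{bn}{bm}{q}\big/\qbinom{n}{m}{q^b}=P(n)/(P(m)P(n-m))$ is exactly the paper's block factorization, since $B_j/B_0=(q^{jb+1};q)_{b-1}/(q;q)_{b-1}$. But where the paper obtains the block congruence modulo $\Phi_b(q)^3$ by citing Andrews' congruence \eqref{an} --- explicitly conceding that Andrews proved it only for prime $b$, so that for composite $b$ the crucial input is asserted rather than proved --- you derive it from scratch by a second-order expansion in $\epsilon=q^b-1$ inside the localization of $\Z[q]$ at the prime ideal $(\Phi_b(q))$, where all nonzero integers and all $[j]_q$ with $b\nmid j$ are units, so the truncated $\log$/$\exp$ modulo $\epsilon^3$ are licit. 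I verified the computation in full: the difference identities (e.g.\ $\binom{n}{3}-\binom{m}{3}-\binom{n-m}{3}=\tfrac12 m(n-m)(n-2)$ and $\sum_{i<n}i^2-\sum_{i<m}i^2-\sum_{i<n-m}i^2=m(n-m)(n-1)$); the power-sum values $\sum_a(1-\zeta^a)^{-1}=\tfrac{b-1}{2}$ and $\sum_a(1-\zeta^a)^{-2}=\tfrac{(b-1)(5-b)}{12}$ for $\zeta$ a primitive $b$-th root of unity; the pairing refinement $S_1\equiv-\tfrac{b-1}{2}-\tfrac{b^2-1}{24}\epsilon\pmod{\Phi_b(q)^2}$, whose $\epsilon$-coefficient is $\tfrac12\sum_a\zeta^a(1-\zeta^a)^{-2}=-\tfrac{b^2-1}{24}$; the identification $\lambda=(-1)^{b-1}q^{\binom{b}{2}}\equiv(1+\epsilon)^{(b-1)/2}\pmod{\epsilon^3}$ in both parities (uniqueness of the square root with residue $-1$ is correctly invoked for even $b$); and the final bookkeeping, where the $\epsilon$-terms cancel and the $\epsilon^2$-coefficient is $\tfrac{(b-1)m(n-m)}{24}(n(b+1)-6)+\tfrac{(b-1)m(n-m)}{4}=\tfrac{(b^2-1)nm(n-m)}{24}$, as required; moreover, exponentiating your block expansion reproduces the paper's block congruence verbatim, a reassuring consistency check. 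The trade-off: the paper's proof is shorter granted the literature, while yours is self-contained and in fact closes the one gap the paper leaves open, proving the composite-$b$ form of \eqref{an} en route. Two small repairs to the write-up: $\log B_0$ itself is undefined in your formalism, since $B_0=[b-1]_q!\not\equiv1\pmod{\Phi_b(q)}$, so the computation should be phrased as an expansion of $\prod_{i=0}^{n-1}(B_i/B_0)\big/\bigl(\prod_{i=0}^{m-1}(B_i/B_0)\prod_{i=0}^{n-m-1}(B_i/B_0)\bigr)$, the $B_0$-powers cancelling because $n=m+(n-m)$, after which every factor you take a logarithm of is $\equiv1\pmod{\epsilon}$; and the ``whence'' for $S_2$ silently uses the cancellation $-2\sum_a(1-\zeta^a)^{-1}+(b-1)=0$ in $S_2=\sum_a(1-\zeta^a)^{-2}-2\sum_a(1-\zeta^a)^{-1}+(b-1)$, which deserves a line.
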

\begin{proof} By Andrews' discussions in \cite{A}, we have
\begin{equation}\label{an}
\frac{(q^{jb+1};q)_{b-1}-(-1)^{j(b-1)}q^{j\binom{b}2}(q;q)_{b-1}}{(1-q^{(j+1)b})(1-q^{jb})}\equiv\frac{(b^2-1)b}{24}\pmod{\Phi_b(q)},
\end{equation}
though he only proved (\ref{an}) when $b$ is prime. Noting that $(q;q)_{b-1}\equiv b\pmod{\Phi_b(q)}$ and $1-q^{jb}\equiv j(1-q^b)\pmod{\Phi_b(q)^2}$, (\ref{an}) can be rewritten as
$$
\frac{(q^{jb+1};q)_{b-1}}{(q;q)_{b-1}}\equiv
(-1)^{j(b-1)}q^{j\binom{b}2}+\frac{(b^2-1)j(j+1)}{24}(1-q^{b})^2\pmod{\Phi_b(q)^3},
$$
It follows that
\begin{align*}
&\qbinom{bn}{bm}q\bigg/\qbinom{n}{m}{q^b}=\frac{\prod_{j=n-m}^{n-1}\big((q^{jb+1};q)_{b-1}/(q;q)_{b-1}\big)}{\prod_{j=0}^{m-1}\big((q^{jb+1};q)_{b-1}/(q;q)_{b-1}\big)}\\
\equiv&(-1)^{(b-1)(n-m)m}q^{\binom{b}{2}(n-m)m}\bigg(1+\frac{(b^2-1)nm(n-m)}{24}(1-q^{b})^2\bigg)\pmod{\Phi_b(q)^3}.\end{align*}
In view of (\ref{qd}), we get (\ref{wlq}).
\end{proof}
Thus,
\begin{align*}
&\frac{[l]_{q^{bc}}!\Phi_{bc}(q)^l}{[\floor{nb/c}]_
{q^{bc}}!}\sum_{bk\equiv br\pmod{bc}}
(-1)^{bk}z^{bk}q^{\binom{bk}{2}}\qbinom{bn}{bk}{q}\qbinom{(bk-br)/(bc)}{l}{q^{bc}}\\
\equiv&\frac{[l]_{(q^{b})^c}!\Phi_c(q^b)^l}{[\floor{nb/c}]_
{(q^{b})^{c/b}}!}\bigg((-1)^{(b-1)nr}\sum_{k\equiv r\pmod{c}}
(-1)^{k}z^{bk}q^{nk\binom{b}2+b\binom{k}2}\qbinom{n}{k}{q^b}\qbinom{(k-r)/c}{l}{q^{bc}}\\
&+\frac{(b^2-1)n}{24}\sum_{k\equiv r\pmod{c}}
(-1)^{k}z^{bk}(1-q^{kb})(1-q^{(n-k)b})\qbinom{n}{k}{q^b}\qbinom{(k-r)/c}{l}{q^{bc}}\bigg)\\
&\pmod{\Phi_b(q)^3}.
\end{align*}
That is,
\begin{Thm} Suppose that $b,c\geq 2$ and $b\mid c$. Then for $n,r\geq 0$, 
\begin{align}&\frac{1}{(1-q^b)^2}\cdot(T_{bc,l}^{(b)}(bn,br;q,z)-
(-1)^{(b-1)nr}T_{c,l}^{(b)}(n,r;q^b,z^bq^{n\binom{b}2}))\notag\\
\equiv&-\frac{\floor{(n-2)b/c}!}{\floor{nb/c}!}\cdot\frac{(b^2-1)n^2(n-1)}{24}\cdot z^bT_{c,l}^{(b)}(n-2,r-1;q^b,z^b)\pmod{\Phi_b(q)}.
\end{align}
In particular,
\begin{align}&T_{b^2,l}^{(b)}(bn,br;q,z)-
(-1)^{(b-1)nr}T_{b,l}^{(b)}(n,r;q^b,z^bq^{n\binom{b}2})\notag\\
\equiv&-\frac{(b^2-1)n}{24}\cdot z^b(1-q^b)^2T_{b,l}^{(b)}(n-2,r-1;q^b,z^b)\pmod{\Phi_b(q)^3}.
\end{align}

\end{Thm}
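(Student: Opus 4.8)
The theorem is essentially a reformulation of the displayed congruence immediately preceding it, so the plan is to (i) justify that congruence from the Wolstenholme--Ljunggren congruence (\ref{wlq}), and (ii) recognize its three pieces as normalized sums $T^{(b)}$, then divide by $(1-q^b)^2$. I would start from $T^{(b)}_{bc,l}(bn,br;q,z)$: since $br$ and $bc$ are multiples of $b$, every index $K\equiv br\pmod{bc}$ has the form $K=bk$ with $k\equiv r\pmod c$, so the sum runs over such $k$ with $\qbinom{bn}{bk}{q}$. Apply (\ref{wlq}) to split $\qbinom{bn}{bk}{q}$ into a main term $\qbinom{n}{k}{q^b}\big((-1)^{b-1}q^{\binom b2}\big)^{(n-k)k}$ and a correction carrying a factor $(1-q^b)^2$, valid modulo $\Phi_b(q)^3$. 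The prefactors of $T^{(b)}_{bc,l}(bn,br;q,z)$ and $T^{(b)}_{c,l}(n,r;q^b,\cdot)$ agree because $\Phi_{bc}(q)=\Phi_c(q^b)$, $(q^b)^c=q^{bc}$, and $\floor{(bn)b/(bc)}=\floor{nb/c}$.

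For the main term I would verify the exact exponent identity $\binom{bk}{2}+\binom b2(n-k)k=nk\binom b2+b\binom k2$ together with the parity identity $bk+(b-1)(n-k)k\equiv k+(b-1)nk\pmod 2$. Since $k\equiv r\pmod c$ forces $(-1)^{(b-1)nk}=(-1)^{(b-1)nr}$ (when $b$ is even, $2\mid c$ makes $k\equiv r\pmod 2$), the constant $(-1)^{(b-1)nr}$ factors out of the sum and the residual sum is exactly the defining sum of $T^{(b)}_{c,l}(n,r;q^b,z^bq^{n\binom b2})$, the choice $Z=z^bq^{n\binom b2}$ supplying the factor $q^{nk\binom b2}$. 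Hence the main term contributes $(-1)^{(b-1)nr}T^{(b)}_{c,l}(n,r;q^b,z^bq^{n\binom b2})$.

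For the correction I would use $1-q^{jb}=(1-q^b)[j]_{q^b}$ to obtain $(1-q^{kb})(1-q^{(n-k)b})=(1-q^b)^2[k]_{q^b}[n-k]_{q^b}$, then the identity $[k]_{q^b}[n-k]_{q^b}\qbinom{n}{k}{q^b}=[n]_{q^b}[n-1]_{q^b}\qbinom{n-2}{k-1}{q^b}$, and the shift $k\mapsto k+1$, which moves the residue class from $r$ to $r-1$ and yields a sign $-1$ and a factor $z^b$. As this term already carries $(1-q^b)^2\equiv0\pmod{\Phi_b(q)^2}$, all remaining factors need only be evaluated modulo $\Phi_b(q)$: there $q^{b\binom k2}\equiv1$, so the residual sum becomes the defining sum of $T^{(b)}_{c,l}(n-2,r-1;q^b,z^b)$, while $[n]_{q^b}\equiv n$, $[n-1]_{q^b}\equiv n-1$, and $[j]_{q^c}\equiv j$ reduce the factorial ratio to $\floor{(n-2)b/c}!/\floor{nb/c}!$. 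Thus the correction contributes $-(1-q^b)^2\,\frac{\floor{(n-2)b/c}!}{\floor{nb/c}!}\,\frac{(b^2-1)n^2(n-1)}{24}\,z^b\,T^{(b)}_{c,l}(n-2,r-1;q^b,z^b)$ modulo $\Phi_b(q)^3$.

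Subtracting the main term shows that the difference $T^{(b)}_{bc,l}(bn,br;q,z)-(-1)^{(b-1)nr}T^{(b)}_{c,l}(n,r;q^b,z^bq^{n\binom b2})$ equals the correction modulo $\Phi_b(q)^3$, hence is divisible by $\Phi_b(q)^2$. Writing $1-q^b=\Phi_b(q)u(q)$ with $u$ coprime to $\Phi_b(q)$, division by $(1-q^b)^2=\Phi_b(q)^2u(q)^2$ is legitimate (the quotient has denominator coprime to $\Phi_b(q)$) and collapses the $\Phi_b(q)^3$-congruence to the asserted $\Phi_b(q)$-congruence, which is the first display. For the ``in particular'' case $c=b$ one has $\floor{nb/c}=n$ and $\floor{(n-2)b/c}=n-2$, so the factorial ratio is $1/(n(n-1))$ and the coefficient collapses to $-(b^2-1)n/24$; multiplying the resulting $\Phi_b(q)$-congruence through by $(1-q^b)^2$ and using $(1-q^b)^2\equiv0\pmod{\Phi_b(q)^2}$ lifts it to a $\Phi_b(q)^3$-congruence, which is the second display. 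The main obstacle is precisely this modulus bookkeeping: one must keep the factor $(1-q^b)^2$ exact while reducing every other factor only modulo $\Phi_b(q)$, so that the final division by $(1-q^b)^2$ degrades the modulus by exactly $\Phi_b(q)^2$ and no precision is lost.
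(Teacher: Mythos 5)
Your proposal is correct and follows essentially the same route as the paper, whose ``proof'' is precisely the displayed computation preceding the theorem: apply the Wolstenholme--Ljunggren $q$-congruence (\ref{wlq}) termwise to $\qbinom{bn}{bk}{q}$ after noting that the indices in $T_{bc,l}^{(b)}(bn,br;q,z)$ are exactly $bk$ with $k\equiv r\pmod c$, match the main term to $T_{c,l}^{(b)}(n,r;q^b,z^bq^{n\binom{b}{2}})$, and reduce the $(1-q^b)^2$-correction modulo $\Phi_b(q)$. Your write-up in fact supplies details the paper leaves implicit --- the exact exponent and parity identities, the even-$b$ sign argument via $2\mid c$, the re-indexing identity $[k]_{q^b}[n-k]_{q^b}\qbinom{n}{k}{q^b}=[n]_{q^b}[n-1]_{q^b}\qbinom{n-2}{k-1}{q^b}$ with the shift $k\mapsto k+1$, and the legitimacy of dividing by $(1-q^b)^2=\Phi_b(q)^2u(q)^2$ with $u$ coprime to $\Phi_b(q)$ --- all of which are accurate.
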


\section{From $q$-congruences to integer congruences}
\setcounter{equation}{0}
\setcounter{Thm}{0}
\setcounter{Lem}{0}
\setcounter{Cor}{0}

In \cite{SD}, Sun and Davis conjectured that
\begin{align}\label{sdc}
&\frac{p^l}{\floor{n/p}!}\sum_{k\equiv r\pmod{p}}(-1)^k\binom{pn}{pk}\bigg(\frac{k-r}{p}\bigg)^l\notag\\
\equiv&\frac{p^l}{\floor{n/p}!}\sum_{k\equiv r\pmod{p}}(-1)^k\binom{n}{k}\bigg(\frac{k-r}{p}\bigg)^l\pmod{p^3}
\end{align}
for prime $p\geq 5$. This conjecture was confirmed by Sun in \cite{S2}, with help of some arithmetical properties of the Stirling numbers of the second kind. 

Define
$$
S_{c,l}^{(b)}(n,r;q,z)=\frac{\Phi_c(q)^l}{[\floor{nb/c}]_{q^{c/b}}!}\sum_{k\equiv r\pmod{c}}
(-1)^kz^kq^{\binom{k}{2}}\qbinom{n}{k}{q}[(k-r)/c]_{q^{c}}^l.
$$
Using the similar discussions as above, we also can obtain that
\begin{equation}
S_{bc,l}^{(b)}(bn+s,br+t;q,z)\equiv(-1)^tz^tq^{\binom{t}{2}}\qbinom{s}{t}{q}S_{c,l}^{(b)}(n,r;q^b,z^b)\pmod{\Phi_b(q)},
\end{equation}
and
\begin{align}&\frac{1}{(1-q^b)^2}\cdot(S_{bc,l}^{(b)}(bn,br;q,z)-
(-1)^{(b-1)nr}S_{c,l}^{(b)}(n,r;q^b,z^bq^{n\binom{b}2}))\notag\\
\equiv&-\frac{\floor{(n-2)b/c}!}{\floor{nb/c}!}\cdot\frac{(b^2-1)n^2(n-1)}{24}\cdot z^bS_{c,l}^{(b)}(n-2,r-1;q^b,z^b)\pmod{\Phi_b(q)}.
\end{align}
In particular, for prime $p\geq 5$,
\begin{align}\label{sdcq}
&\frac{[p]_{q^{p^{\alpha}}}^l}{[\floor{n/p^\alpha}]_{q^{p^{\alpha}}}!}\sum_{k\equiv r\pmod{p^\alpha}}
(-1)^kq^{\binom{kp}{2}}\qbinom{pn}{pk}{q}[(k-r)/p^\alpha]_{q^{p^\alpha}}^l\notag\\
\equiv&\frac{[p]_{q^{p^{\alpha}}}^l}{[\floor{n/p^\alpha}]_{q^{p^{\alpha}}}!}\sum_{k\equiv r\pmod{p^\alpha}}
(-1)^kq^{nk\binom{p}2+p\binom{k}{2}}\qbinom{n}{k}{q^p}[(k-r)/p^\alpha]_{q^{p^\alpha}}^l\notag\\
+&\frac{(p^2-1)n[p]_{q^{p^{\alpha}}}^l}{24[\floor{n/p^\alpha}]_{q^{p^{\alpha}}}!}\sum_{k\equiv r\pmod{p^\alpha}}
(-1)^k(1-q^{kp})(1-q^{(n-k)p})\qbinom{n}{k}{q^p}[(k-r)/p^\alpha]_{q^{p^\alpha}}^l\notag\\
&\pmod{[p]_q^3}.
\end{align}

However, one might doubt whether (\ref{sdcq}) surely implies (\ref{sdc}), since neither side of (\ref{sdcq}) is a polynomial in $q$. So we need to give an explanation how to deduce (\ref{sdc}) from (\ref{sdcq}) by substituting $q=1$.

Let $L(q)$ and $R(q)$ denote the left side and the right side of (\ref{sdcq}) respectively. Let
$$
F(q)=\frac{[\floor{n/p^\alpha}]_{q^{p^{\alpha}}}!}{\prod_{j\geq \alpha+1}[p]_{q^{p^{j-1}}}^{\floor{{n}/{p^{j}}}}}.
$$
Then clearly $F(q)\in\Z[q]$ in view of (\ref{nd}). And from Corollary \ref{c1}, we also know that $F(q)L(q), F(q)R(q)\in\Z[q]$.
Hence there exists a polynomial $H(q)\in\Z[q]$ such that
$$
F(q)L(q)-F(q)R(q)=[p]_q^3H(q).
$$
Substituting $q=1$, we get $$F(1)L(1)\equiv F(1)R(1)\pmod{p^3}.$$ But by (\ref{ndd}), $$F(1)=\frac{\floor{n/p^\alpha}!}{p^{\sum_{j\geq \alpha+1}\floor{{n}/{p^{j}}}}}$$ is not divisible by $p$. Thus (\ref{sdc}) is concluded.
\begin{Ack}
I am grateful to Professor Zhi-Wei Sun for his helpful suggestions on this paper.
\end{Ack}

\end{document}